\documentclass[a4paper,10pt]{article}

\usepackage[utf8]{inputenc}
\usepackage{authblk}
\usepackage{amsthm,amsmath,amssymb,latexsym,dsfont}

\usepackage{enumerate}
\usepackage{tikz}
\usetikzlibrary{arrows.meta, positioning}
\usepackage[numbers]{natbib}
\usepackage{xcolor}

\usepackage{url}
\usepackage{etoolbox}
\patchcmd{\thebibliography}{\section*{\refname}}{\section*{\refname}\small}{}{}
\newtheorem{theorem}{Theorem}[section]
\newtheorem{proposition}[theorem]{Proposition}

\newtheorem{remark}[theorem]{Remark}

\newcommand{\N}{\mathbb{N}}
\newcommand{\Z}{\mathbb{Z}}

\newcommand{\R}{\mathbb{R}}

\renewcommand{\P}{\mathbb{P}}
\newcommand{\E}{\mathbb{E}}

\usepackage[margin=30mm]{geometry}

\title{Functional dependence and synchronous coupling in ergodic autoregressions}

 \author{ Paul Doukhan
(\footnote{\texttt{doukhan@cyu.fr}. CY Cergy Paris University,  AGM Mathematics, 2 Bd. Adolphe Chauvin, 95000 Cergy-Pontoise, AGM-UMR 8080, France, ORCID 0000-0001-6170-7008}\ ),

 Lionel Truquet
(\footnote{\texttt{Lionel.TRUQUET@ensai.fr}.  Univ Rennes, Ensai, CNRS, CREST -- UMR 9194, F-35000 Rennes, France, ORCID 0000-0002-0017-1438 }\ ),
 
 }

\begin{document}

\maketitle
\begin{abstract}
Functional dependence measures have become an important tool in the analysis of nonlinear time series and are typically formulated with respect to a given innovation representation of the process. This note points out that the probability space on which such representations yield the expected memory loss properties may not always coincide with the natural dynamical probability space of the model. We exhibit  classes of uniformly ergodic autoregressive processes for which the behavior of the natural innovation coupling undergoes a qualitative transition as the model parameter varies. For this family of models, this transition coincides with a change in the sign of an associated Lyapunov exponent. In particular, a positive Lyapunov exponent may prevent the forgetting of initial perturbations along trajectories driven by the same innovations, despite uniform ergodicity of the associated Markov chain. These observations highlight the importance of carefully specifying the underlying probability space when interpreting or applying functional dependence measures. \end{abstract}

\vspace{1cm}
\footnoterule
\noindent
{\sl 2020 Mathematics Subject Classification:}
Primary 60J05; secondary 37H15, 62M10.\\
\noindent
{\sl Keywords and Phrases:}
Functional dependence, Bernoulli shift representation, iterated random maps, Lyapunov exponent, synchronous coupling, uniform ergodicity.

\section{Introduction}






In time series analysis, various notions of weak dependence have been introduced to establish limit theorems and study the asymptotic behavior of statistical procedures. Among the earliest and most influential are mixing conditions; see \cite{D} for a comprehensive overview. Although mixing coefficients only depend on the distribution of the process, they are often difficult to verify in nonlinear models and may even fail to hold for rather simple autoregressive or bilinear processes; see \cite{andrews3,DMT}. This has motivated the development of alternative notions of weak dependence, frequently based on coupling constructions or covariance inequalities; see \cite{Dedecker_2007}. A great overview on all the dependence conditions for Markov chains is nicely developed in \cite{kulik}.

A particularly successful approach is the functional dependence measure introduced by \cite{WBW2005}. It is based on a causal Bernoulli shift representation
\begin{equation}
\label{BS}
X_t=H(\mathcal F_t),\qquad
\mathcal F_t=(\varepsilon_t,\varepsilon_{t-1},\ldots),
\end{equation}
where $(\varepsilon_t)_{t\in\mathbb Z}$ is a sequence of i.i.d.\ random variables. Let $\varepsilon_0'$ be an independent copy of $\varepsilon_0$, and define
\[
\mathcal F_n^*
=
(\varepsilon_n,\varepsilon_{n-1},\ldots,\varepsilon_1,
\varepsilon_0',
\varepsilon_{-1},\varepsilon_{-2},\ldots).
\]
The functional dependence coefficients are then defined by
\[
\delta_q(n)
=
\left\|
H(\mathcal F_n)-H(\mathcal F_n^*)
\right\|_q,
\qquad n\ge1.
\] 
These coefficients have proved remarkably effective for deriving limit theorems. For instance, the central limit theorem holds under the simple summability condition $\sum_{n\ge1}\delta_2(n)<\infty$.

Unlike classical mixing coefficients, however, functional dependence measures are not intrinsic properties of the distribution of the process. They are attached to a particular Bernoulli shift representation, which is generally not unique. This feature is usually harmless when a convenient representation is readily available, but it becomes more delicate for Markov chains generated by iterated random maps. The classical results in \cite{NT1978} hold under weak conditions stressed by \cite{Mokk} for autoregressive cases. The general theory of dependence of Markov chains is provided in \cite{kulik}: conditions for ergodicity, geometric ergdicity and uniform ergodicity of stationary Markov processes respectively yields $\beta-$mixing, geometric $\beta-$mixing and  geometric $\varphi-$mixing  

Consider indeed an autoregressive Markov chain of the form
\[
X_t=f_t(X_{t-1}),
\qquad
f_t(x)=F(x,\varepsilon_t),
\]
where $(f_t)_{t\in\mathbb Z}$ is a sequence of i.i.d.\ random maps. 
For simplicity of this note, we only consider random maps on the real line. Applying the functional dependence measure to the natural innovation sequence amounts to comparing two trajectories driven by the same realization of the random maps after perturbing the initial innovation only. From the viewpoint of random dynamical systems, this is precisely the natural synchronous coupling associated with the innovation sequence. When the random maps satisfy a global average contractivity condition, as in \cite{WuShao2004}, this coupling contracts exponentially fast and the natural innovation representation provides a convenient Bernoulli shift representation. Beyond this contractive setting, however, the behavior of the synchronous coupling becomes much less transparent.

The purpose of this note is to point out that the natural innovation representation may fail to reflect the ergodic behavior of the underlying Markov chain. Our main result, Theorem \ref{p1}, shows that, under a positive Lyapunov exponent and mild regularity assumptions, trajectories driven by the same innovation sequence fail to synchronize almost surely. Consequently, the natural innovation representation cannot yield sufficiently decaying functional dependence coefficients, despite uniform ergodicity of the Markov chain. The problem is illustrated with a simple smooth autoregressive model of the form
\[
F(x,\varepsilon)=af(x)+\varepsilon,
\]
where $f$ is bounded, smooth and periodic. Under mild assumptions on the noise distribution, the associated Markov chain is uniformly ergodic for every value of the parameter $a$. Nevertheless, the behavior of the natural synchronous coupling changes qualitatively with $a$: depending on the sign of a Lyapunov exponent, trajectories driven by the same innovations either forget or preserve their initial perturbation.

Our purpose is not to question the usefulness of functional dependence measures, but rather to emphasize that they should be interpreted relative to the underlying probability space on which the Bernoulli shift representation is constructed. In contrast with classical mixing coefficients, functional dependence coefficients describe the stability of a specific stochastic representation rather than an intrinsic property of the process distribution. For Markov chains, this distinction naturally connects the functional dependence approach with the theory of synchronous couplings and random dynamical systems.

The paper is organized as follows. Section~\ref{2} presents two pathological examples illustrating that perturbations of the innovation sequence need not be forgotten over time, even for simple random mappings. Section~\ref{3} then turns to smooth functional autoregressive models. After a brief discussion of Lyapunov exponents and synchronous couplings, we study in detail a family of uniformly ergodic functional AR$(1)$ models. We show that the natural synchronous coupling undergoes a qualitative transition as the model parameter varies: for small values of the parameter, initial perturbations are forgotten exponentially fast, whereas for sufficiently large values they persist indefinitely. This transition is shown to coincide with a change in the sign of the associated Lyapunov exponent. Finally, Section~\ref{4} shows that this phenomenon is specific to the natural innovation representation. Using the classical Doeblin decomposition, we briefly explain how every uniformly ergodic Markov chain admits a causal Bernoulli shift representation with geometrically decaying functional dependence coefficients after enlarging the underlying probability space. The paper concludes with some remarks and open problems.

\section{Pathological examples}\label{2}

We begin by establishing a simple obstruction to the existence of a causal Bernoulli shift representation on the natural innovation space. As we shall see, such a representation necessarily implies a pullback synchronization property for the associated synchronous coupling. This criterion will then be used to construct simple uniformly ergodic autoregressive models for which the natural innovation representation cannot be used.

From now on, for integers $s\leq t$, we write
\[
f_s^t=f_t\circ f_{t-1}\circ\cdots\circ f_s
\]
for the composition of the random mappings.

Note that in this case, for $t>0$, the equality $X_t=f_1^t(X_0)$ implies a representation of the form $$X_t=H_t\left(\varepsilon_t,\ldots,\varepsilon_1,X_0\right)$$
where $H_t$ is a suitable measurable function.

\begin{proposition}\label{bern}
Let
\[
X_t=F(X_{t-1},\varepsilon_t)=f_t(X_{t-1})
\]
be a stationary Markov chain with invariant probability distribution $\pi$. Suppose that $(X_t)_{t\in\mathbb Z}$ admits a causal Bernoulli shift representation on the natural innovation space. Then the associated synchronous coupling satisfies the following pullback synchronization property.
$$\lim_{n\rightarrow \infty}\int\int \min\left\{\left\vert f_{-n}^0(x)-f_{-n}^0(y)\right\vert,1\right\}\pi(dx)\pi(dy)=0\mbox{ a.s.}.$$
\end{proposition}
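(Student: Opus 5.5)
The plan is to use the representation to identify $X_0$ with a function of the past innovations. Then two independent $\pi$-distributed starting points placed at time $-n-1$ are forced, after $n+1$ steps, to be close to this common function. Concretely, the hypothesis gives a measurable $H$ with $X_t=H(\varepsilon_t,\varepsilon_{t-1},\ldots)$ a.s. for all $t$. Since $(X_t)$ also solves $X_t=f_t(X_{t-1})$, iteration gives
\[
X_0=f_{-n}^0(X_{-n-1})\quad\text{a.s.}
\]
with $X_{-n-1}=H(\mathcal F_{-n-1})$, which is independent of $(\varepsilon_{-n},\ldots,\varepsilon_0)$ and has law $\pi$.

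The key step is the following reformulation. Let $Z_n(x)=f_{-n}^0(x)$, and write $\mathcal G_n=\sigma(\varepsilon_{-n},\ldots,\varepsilon_0)$. Then
\[
\int \min\{|f_{-n}^0(x)-X_0|,1\}\,\pi(dx)=\E\big[\min\{|Z_n(X'_{-n-1})-X_0|,1\}\,\big|\,\mathcal F_0\big],
\]
where $X'_{-n-1}$ is an independent copy with law $\pi$, independent of everything. This suggests comparing $Z_n(x)$ with the conditional object $\E[\,\cdot\mid \mathcal G_n]$. The point is that $X_0$ is $\mathcal F_0$-measurable and $\mathcal G_n\uparrow\mathcal F_0$. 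By martingale convergence (L\'evy's upward theorem), for any bounded measurable $g$ we have $\E[g(X_0)\mid\mathcal G_n]\to g(X_0)$ a.s. Moreover, the conditional law of $X_0$ given $\mathcal G_n$ is exactly the pushforward $\mu_n:=\pi\circ (f_{-n}^0)^{-1}$, because $X_{-n-1}\sim\pi$ is independent of $\mathcal G_n$. So $\mu_n$ is a regular conditional law of $X_0$ given $\mathcal G_n$.

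Next, I show $\mu_n$ concentrates at $X_0$. Approximate $X_0$ in probability by a $\mathcal G_m$-measurable variable $Y_m$, which is possible since $\mathcal F_0=\bigvee_m\mathcal G_m$. Then
\[
\E\int\min\{|x-X_0|,1\}\mu_n(dx)=\E\,\E[\min\{|X'_0-X_0|,1\}\mid\mathcal G_n],
\]
where $X'_0$ is a conditionally independent copy given $\mathcal G_n$. For $n\ge m$, both $X_0$ and $X'_0$ are within $\eta$ of $Y_m$ with high probability. Hence the expectation tends to $0$, and by the triangle inequality
\[
I_n:=\int\int\min\{|f_{-n}^0(x)-f_{-n}^0(y)|,1\}\pi(dx)\pi(dy)\to0
\]
in $L^1$. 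To upgrade to a.s. convergence, I use the equivalent martingale form. The quantity $\int\min\{|x-X_0|,1\}\mu_n(dx)$ equals $\E[\phi(X''_0,X_0)\mid\mathcal F_0\vee\ldots]$, and the cleaner route is to note the following: for each fixed $y$, $\int\min\{|x-y|,1\}\mu_n(dx)=\E[\min\{|X_0-y|,1\}\mid\mathcal G_n]$. This converges a.s. to $\min\{|X_0-y|,1\}$ simultaneously for $y$ in a countable dense set, and the functions are $1$-Lipschitz in $y$. Hence $\int\min\{|x-X_0|,1\}\mu_n(dx)\to0$ a.s. Finally,
\[
I_n\le2\int\min\{|x-X_0|,1\}\mu_n(dx),
\]
which gives almost sure convergence to $0$.

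The main obstacle is the measure-theoretic bookkeeping. One point is that the uniform-in-$y$ a.s. convergence requires a single null set. The Lipschitz/density argument above is how I would handle it, using separability of $\R$. The other point is the careful justification that $\mu_n$ is a version of the conditional law given $\mathcal G_n$; this rests on the independence of $X_{-n-1}$ from $\mathcal G_n$, which is guaranteed by causality of $H$.
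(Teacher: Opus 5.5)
Your proposal is correct and follows essentially the paper's argument: you identify $\pi\circ(f_{-n}^0)^{-1}$ as the conditional law of $X_0$ given $\sigma(\varepsilon_{-n},\ldots,\varepsilon_0)$, apply L\'evy's upward theorem, and conclude with the triangle-inequality bound $I_n\le 2\int\min\{|f_{-n}^0(x)-X_0|,1\}\,\pi(dx)$. Your countable-dense-set plus $1$-Lipschitz argument is a clean, explicit justification of the step the paper states as ``almost surely, for all continuous bounded $\phi$'', which is needed because the test function is centred at the random point $X_0$; the $L^1$ detour via $Y_m$ is unnecessary and can be dropped.
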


\paragraph{Proof.}

Suppose that for $t\in\Z$
\[
X_t
=
H(\varepsilon_t,\varepsilon_{t-1},\ldots)
\]
for some measurable function $H$.
Let also $\phi:\R\rightarrow \R$ be a measurable and bounded function.
Setting $\mathcal{F}_{-n}^0=\sigma\left(\varepsilon_0,\ldots,\varepsilon_{-n}\right)$, the Doob martingale limit theorem ensures that 
$$\lim_{n\rightarrow \infty}\E\left[\phi(X_0)\vert\mathcal{F}_{-n}^0\right]=\phi(X_0)\mbox{ a.s.}$$
Note that we have the equality 
$$\E\left[\phi(X_0)\vert\mathcal{F}_{-n}^0\right]=\int \phi\left(f_{-n}^0(x)\right) \pi(dx)\mbox{ a.s.}$$
One can deduce that almost surely, for all continuous and bounded mapping $\phi$, 
$$\lim_{n\rightarrow \infty}\int \phi\left(f_{-n}^0(x)\right) \pi(dx)=\phi(X_0).$$
Taking $\phi(z)=\min\left\{d(z,X_0),1\right\}$, we have from the triangular inequality
$$\int\int \min\left\{\left\vert f_{-n}^0(x)-f_{-n}^0(y)\right\vert,1\right\}\pi(dx)\pi(dy)\leq 2\int \phi\left(f_{-n}^0(x)\right)\pi(dx)\rightarrow 0\mbox{ a.s.}$$
which shows the result.$\square$

\subsection{A discontinuous autoregression}

Consider the autoregressive model

\[
F(x,\varepsilon)=r(x)+\varepsilon,
\]

where the innovation $\varepsilon$ admits a probability density which is positive and bounded away from zero on every compact interval.

Partition the real line into intervals of equal length and define
\[
r(x)
=
h\sum_{\ell\in\mathbb Z}
\mathbf 1_{[a+2\ell h,a+(2\ell+1)h)}(x),
\]
where $a\in\mathbb R$ and $h>0$.

Writing
\[
I_p=[a+ph,a+(p+1)h),
\]
the function $r$ alternates between the values $0$ and $h$ on successive intervals.

Suppose that $(x,y)\in I_p\times I_{p'}$, where $p$ and $p'$ have opposite parity.
Then
\[
r(x)-r(y)=\pm h,
\]
and therefore
\[
|f_1(x)-f_1(y)|=h.
\]
Since the intervals have length $h$, the points $f_1(x)$ and $f_1(y)$ belong to two consecutive intervals, and hence again to intervals of opposite parity. By induction,
\[
|f_1^n(x)-f_1^n(y)|=h,\qquad n\ge1.
\]
Applying the same argument to the compositions
$f_{-n}^0=f_0\circ f_{-1}\circ\cdots\circ f_{-n}$ yields
\[
|f_{-n}^0(x)-f_{-n}^0(y)|=h,\qquad n\ge1.
\]
Consequently, the synchronous coupling never synchronizes.

Since the Markov chain is uniformly ergodic and admits a unique invariant probability measure $\pi$ equivalent to the Lebesgue measure and which has a full support, the set $J\times J'$ with $J=\cup_{p\in\Z}I_{2p}$ and $J'=\cup_{p\in\Z} I_{2p+1}$ has positive $\pi\otimes \pi$ probability and Proposition~\ref{bern} implies that no causal Bernoulli shift representation exists on the natural innovation space.

\subsection{A second discontinuous example}

Let $\beta>0$ and define

\[
f_t(x)
=
\begin{cases}
\varepsilon_t,&x>0,\\
-\beta\varepsilon_t,&x\le0.
\end{cases}
\]

If the $\varepsilon_t'$s do not vanish a.s. and $x$ and $y$ have opposite signs, then so do
$f_t(x)$ and $f_t(y)$ for every realization of the noise.

Hence,

\[
|f_1^n(x)-f_1^n(y)|
=
(1+\beta)|\varepsilon_n|,
\qquad n\ge1,
\]

and

\[
|f_{-n}^0(x)-f_{-n}^0(y)|
=
(1+\beta)|\varepsilon_0|,
\qquad n\ge1.
\]

Therefore the pullback synchronous coupling never synchronizes.

Assuming again that $\varepsilon$ has a probability density which is positive and bounded away from zero on compact intervals, the associated Markov chain is uniformly ergodic. Proposition~\ref{bern} then shows that no causal Bernoulli shift representation exists on the natural innovation space.

\section{Smooth models and synchronous couplings}\label{3}

\subsection{Forward synchronous couplings and Lyapunov exponents}

Proposition~\ref{bern} is based on a pullback construction obtained by replacing the remote past in a Bernoulli shift representation. By contrast, the functional dependence coefficients introduced by \cite{WBW2005} naturally involve a \emph{forward} synchronous coupling. Indeed, for two initial conditions $x,y\in\R$, let
\[
X_n(x)=g_n(x),\qquad
X_n(y)=g_n(y),
\]
where
\[
g_n=f_1^n=f_n\circ f_{n-1}\circ\cdots\circ f_1.
\]
The two trajectories are therefore driven by the same realization of the innovation sequence, and the question is whether the initial perturbation is eventually forgotten, that is,
\[
|g_n(x)-g_n(y)|\longrightarrow0.
\]
Under stationarity, the pullback coupling appearing in Proposition~\ref{bern} and this forward synchronous coupling are closely related. The latter is also the natural framework for introducing Lyapunov exponents.

Throughout this section we assume that the random mappings are continuously differentiable. The (top) Lyapunov exponent is defined by
\[
\lambda(x)
=
\lim_{n\rightarrow\infty}
\frac1n
\log |g_n'(x)|,
\]
whenever the limit exists almost surely.
 We refer to \cite{Arnold1998} for a general introduction to Lyapunov exponents in random dynamical systems.

Assume now that the Markov chain admits a unique invariant probability measure $\pi$ and that
\[
\int
\E\!\left|
\log |f_1'(x)|
\right|
\,\pi(dx)
<\infty.
\]
Then,
\[
\frac1n
\log |g_n'(x)|
=
\frac1n
\sum_{t=1}^n
\log
|f_t'(X_{t-1}(x))|
\longrightarrow
\int
\E
\log
|f_1'(y)|
\,\pi(dy),
\]
almost surely by the ergodic theorem. Accordingly, we define the Lyapunov exponent of the stationary Markov chain by
\begin{equation}
\label{def}
\lambda
=
\int
\E
\log |f_1'(y)|
\,\pi(dy).
\end{equation}

For iterated random maps of the form
\[
f_t(x)=F(x,\varepsilon_t),
\]
it is tempting to apply the functional dependence measure directly on the natural innovation space generated by $(\varepsilon_t)_{t\in\mathbb Z}$. In this framework, the decay of the functional dependence coefficients is naturally related to the behavior of the forward synchronous coupling, that is, to the question whether two trajectories driven by the same innovation sequence eventually forget their initial separation.

Synchronization under common noise has been extensively studied in the random dynamical systems literature; see for instance
\cite{flandoli2017synchronization1},
\cite{flandoli2017synchronization2},
\cite{gess2024lyapunov},
\cite{scheutzow2016synchronization},
and
\cite{newman2018necessary}.
It is natural to expect that the sign of the Lyapunov exponent determines the behavior of the synchronous coupling: a negative exponent should lead to synchronization of nearby trajectories, whereas a positive exponent should prevent the forgetting of initial perturbations. Although this heuristic is often correct under suitable structural assumptions, the relationship is considerably more subtle in general. For instance, \cite{scheutzow2016synchronization} exhibits simple examples for which synchronization occurs despite a positive Lyapunov exponent, and conversely where synchronization fails although the Lyapunov exponent is negative.

Most available synchronization results for iterated random maps on the real line rely on additional structural assumptions, such as monotonicity, order preservation, compactness of the state space, or global average contractivity. In particular, the average contractivity assumptions considered by Letac \cite{Letac}, Diaconis and Freedman \cite{diaconis1999iterated}, and Wu and Shao \cite{WuShao2004} imply the negativity of the Lyapunov exponent. The converse, however, is false in general.

The next subsection shows that, for the family of smooth autoregressive models considered in this paper, the behavior of the natural synchronous coupling undergoes a qualitative transition which coincides with a change in the sign of the Lyapunov exponent.
\subsection{Functional AR(1) with positive Lyapunov exponent}
We consider the model
\begin{equation}
    \label{ar1}
    X_t=r(X_{t-1})+\varepsilon_t,
    \end{equation}
with $(\varepsilon_t)_{t\in\N}$ a sequence of i.i.d. random variables with a density $f_{\varepsilon}$ positive everywhere and lower-bounded on any compact interval.
The measurable mapping $r$ is assumed to be bounded. The current model was studied in \cite{DG80} and a simulation study was even provided in \cite{doukhan83} in order to consider sensitivity results wrt to small values of the underlying noise. The study was based on one example in \cite{ruelle77}. 
It is widely known that under these assumptions, the Markov chain $(X_t)_{t\in\N}$ satisfies the Doeblin condition and it is uniformally ergodic and $\phi-$mixing with an exponential rate.
See for example 
 \cite{kulik} for the connexions to $\beta-$mixing  and $\varphi-$mixing properties. In particular, there exists a unique invariant probability measure $\pi$. The measure $\pi$ has a probability density $f_{\pi}$ w.r.t. the Lebesque measure and satisfying the identity
$$
f_{\pi}(y)=\int f_{\varepsilon}(y-r(x))f_{\pi}(x)dx.$$

Note that $f_{\pi}$ is positive.
Now let $X_0$ be a random variable with probability distribution $\pi$ and independent from the sequence $\left(\varepsilon_t\right)_{t\geq 1}$. Let also $\varepsilon'_1$ a copy of $\varepsilon_1$ and independent of $\sigma\left(X_0,\varepsilon_t:t\geq 1\right)$.
We introduce a version $(X'_t)_{t\geq 0}$ of $(X_t)_{t\geq 0}$ such that 
$$
\begin{array}{lll}
X'_0&=&X_0,\quad\\
X_1'&=&r(X_0)+\varepsilon_1',\quad \\
X'_t&=&r(X_{t-1}')+\varepsilon_t,\qquad\mbox{ for } t\geq 2.
\end{array}$$  
Finally let 
\begin{equation}
\label{D_t}
    D_t=X_t-X'_t,\quad t\geq 1.
    \end{equation}
In what follows, $m_2$ will denote the Lebesgue measure on $\R^2$ and $m$ the Lebesgue measure on $\R$.
\begin{theorem}\label{p1}
Assume that $r$ is continuously differentiable with a bounded and uniformly continuous derivative $r'$ and such that
\[
\lambda:=\E\log |r'(X_0)|>0,\qquad \E\log_-|r'(X_0)|<\infty.
\]
For $t\geq 1$, set,
\[
Q_t=\frac{|r(X_t)-r(X_t')|}{|X_t-X_t'|}\,\mathbf{1}_{(D_t\ne0)}.
\]

Suppose also that, for some $\delta_0>0$,
\begin{equation}\label{eq:new-log-condition}
\lim_{\eta\downarrow0}\sup_{t\geq 2}
\E\left[
\log_- Q_t\,
\mathds{1}_{\{|r'(X_t)|\leq \eta,\ |D_t|\leq \delta_0\}}
\right]=0.
\end{equation}
Then $\P\left(\displaystyle\lim_{n\rightarrow \infty}D_n=0\right)=0$.
Additionally, for $m_2-$almost every $(x,y)\in\R^2$, 
$$\P\left(\lim_{n\rightarrow \infty}\left\vert f_1^n(x)-f_1^n(y)\right\vert=0\right)=0.$$

\end{theorem}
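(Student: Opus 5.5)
We work with the multiplicative structure of the difference process. For $t\ge1$ we have $D_{t+1}=r(X_t)-r(X_t')$, so on $\{D_t\neq0\}$,
\[
|D_{t+1}|=Q_t|D_t|,
\]
and $D_t=0$ forces $D_s=0$ for all $s\ge t$. Note first that $D_1=\varepsilon_1-\varepsilon_1'\ne0$ a.s. Moreover, since $(X_t,X_t')$ has a joint density, $\P(D_t=0)=0$ for every $t$. Hence a.s. $D_t\ne0$ for all $t$, and
\[
\log|D_n|=\log|D_1|+\sum_{t=1}^{n-1}\log Q_t .
\]
The goal is to show that on $A=\{D_n\to0\}$ this sum cannot tend to $-\infty$, because along synchronizing trajectories $\log Q_t\approx\log|r'(X_t)|$, whose average is $\lambda>0$.

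\textbf{Step 1 (linearization).} Fix $\delta\le\delta_0$ and $\eta>0$, and split $\log Q_t$ on the event $\{|D_t|\le\delta\}$.
\begin{itemize}
\item Where $|r'(X_t)|>\eta$: by uniform continuity of $r'$ and the mean value theorem, $Q_t\ge|r'(X_t)|-\omega(\delta)$, where $\omega$ is the modulus of continuity of $r'$. For $\delta$ small relative to $\eta$ this gives $\log Q_t\ge\log|r'(X_t)|-C\omega(\delta)/\eta$.
\item Where $|r'(X_t)|\le\eta$: we only use $\log Q_t\ge-\log_-Q_t$ and control this part in expectation via \eqref{eq:new-log-condition}.
\end{itemize}

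\textbf{Step 2 (the event $A$).} On $A$ there is a random $T$ with $|D_t|\le\delta$ for all $t\ge T$. Since $\log|D_n|\to-\infty$ on $A$, the Cesàro averages satisfy $\limsup\frac1n\sum_{t<n}\log Q_t\le 0$ on $A$. On the other hand, the ergodic theorem for the stationary chain $(X_t)$ gives
\[
\frac1n\sum_{t<n}\log|r'(X_t)|\to\lambda,
\]
and this limit is integrable thanks to $\E\log_-|r'(X_0)|<\infty$ and the boundedness of $r'$.

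Combining the two, on $A$ we get
\[
0\ge\lambda-C\omega(\delta)/\eta-\limsup\frac1n\sum_{t<n}\Bigl(\log_-Q_t\mathbf 1_{\{|r'(X_t)|\le\eta,|D_t|\le\delta\}}+\log_-|r'(X_t)|\mathbf 1_{\{|r'(X_t)|\le\eta\}}\Bigr).
\]
The last term, built from $\log_-|r'(X_t)|$, has ergodic average $\E[\log_-|r'(X_0)|\mathbf 1_{\{|r'(X_0)|\le\eta\}}]$, which tends to $0$ as $\eta\downarrow0$. For the $Q_t$ term we cannot apply the ergodic theorem, since the pair process is not stationary. Instead we take expectations restricted to $A$ (or use Fatou together with the uniform-in-$t$ bound \eqref{eq:new-log-condition}). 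The resulting bound is
\[
\E\Bigl[\mathbf 1_A\liminf_n\frac1n\sum_t(\cdots)\Bigr]\le\sup_t\E[\cdots],
\]
and the right-hand side tends to $0$ as $\eta\downarrow0$.

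To make the argument rigorous we run it by contradiction, assuming $\P(A)>0$. We integrate the inequality above over $A$, choosing first $\eta$ small and then $\delta$ small relative to $\eta$. The first-order and $\eta$-terms then become smaller than $\lambda\P(A)/2$. What remains is a truncation issue: the random time $T$ must be replaced by the indicator $\mathbf 1_{\{|D_t|\le\delta\}}$ inside the sum, which is harmless since only finitely many terms differ on $A$. This yields $\lambda\P(A)\le\lambda\P(A)/2$, so $\P(A)=0$.

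\textbf{Step 3 (main obstacle).} The delicate point is exactly this interchange of $\liminf$ of Cesàro averages with the expectation restricted to $A$ for the non-stationary term. I would handle it through the pointwise inequality $\liminf\frac1n\sum Y_t\le\liminf\frac1n\sum\E Y_t$ for nonnegative $Y_t$, which follows from Fatou's lemma. Applying it to
\[
Y_t=\log_-Q_t\mathbf 1_{\{|r'(X_t)|\le\eta,|D_t|\le\delta_0\}}
\]
gives the needed bound.

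\textbf{Step 4 (fixed starting points).} For the statement with $m_2$-a.e.\ $(x,y)$, condition on $(X_0,\varepsilon_1,\varepsilon_1')$ and use the Markov property: $(X_1,X_1')$ has law with a positive density on $\R^2$, and from time $1$ on both trajectories are $f_2^n$ applied to $(X_1,X_1')$. Step 2 shows that
\[
\int \P\bigl(|f_2^n(u)-f_2^n(v)|\to0\bigr)\,p(u,v)\,du\,dv=0
\]
with $p>0$, where $p$ is the density of $(X_1,X_1')$. Hence the probability vanishes for $m_2$-a.e.\ $(u,v)$. Finally, $f_2^n$ has the same law as $f_1^{n-1}$, which gives the claim.
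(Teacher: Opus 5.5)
Your architecture matches the paper's. You use the product formula for $|D_n|$, the linearization $Q_t\approx|r'(X_t)|$ on $\{|r'(X_t)|>\eta,\ |D_t|\le\delta\}$, condition \eqref{eq:new-log-condition} on the bad set, the ergodic theorem for $\log|r'(X_t)|$, a contradiction with $\log|D_n|\to-\infty$, and integration against the positive density of $(X_1,X_1')$. For the non-stationary term you take a one-sided lower bound and apply Fatou to the Ces\`aro $\liminf$ integrated over $A$. That is a valid and slightly leaner substitute for the paper's route, which proves convergence in probability of $\frac1n\sum_t\bigl|\log Q_t-\log|r'(X_t)|\bigr|\mathbf 1_C$ and then extracts an a.s.\ subsequence.

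The genuine gap is the sentence ``since $(X_t,X_t')$ has a joint density, $\P(D_t=0)=0$'', which you never justify. For $t\ge2$ both coordinates receive the same $\varepsilon_t$. So $(X_t,X_t')$ has a density exactly when $D_t=r(X_{t-1})-r(X'_{t-1})$ has an absolutely continuous law, and that depends on $r$ not collapsing sets of positive measure. If $r$ were constant on an interval, then $D_2=0$ with positive probability, hence $D_n\to0$ with positive probability, and the conclusion would fail. This is therefore exactly where the hypotheses must rule out degeneracy of $r$. It is also what gives $Q_t>0$ a.s., so that $\log Q_t$ is finite and your identity for $\log|D_n|$ holds.

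The paper fills the gap in four steps:
(i) $\E\log_-|r'(X_0)|<\infty$ forces $\P(r'(X_0)=0)=0$, and since $f_\pi>0$ everywhere this gives $m\{r'=0\}=0$.
(ii) Covering the open set $\{r'\ne0\}$ by intervals on which $r$ is a $C^1$-diffeomorphism shows that $r^{-1}(N)$ is Lebesgue-null whenever $N$ is.
(iii) A Fubini argument on sections then shows that $(r(X),r(Y))$ is absolutely continuous whenever $(X,Y)$ is, and adding the common independent $\varepsilon_t$ preserves this.
(iv) Induction from $(X_1,X_1')$ yields $\P(D_t=0)=0$ for all $t$.

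Three smaller slips should also be fixed. First, your displayed inequality on $A$ has $\limsup\frac1n\sum(\cdots)$, which says nothing about the $\liminf$ that Fatou controls. Since $\frac1n\sum\log|r'(X_t)|$ converges, taking $\limsup$ of your lower bound actually gives the inequality with $\liminf$, which is what you need. Second, Fatou gives $\E[\liminf_n\frac1n\sum_tY_t]\le\liminf_n\frac1n\sum_t\E Y_t$, not a pointwise inequality. Third, \eqref{eq:new-log-condition} only covers $t\ge2$, so you should discard the a.s.\ finite $t=1$ term before taking expectations.
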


\begin{remark}
Theorem \ref{p1} shows that when the Lyapunov exponent $\lambda$ is positive, one cannot expect forward synchronization of the paths. However one cannot use the converse of Proposition \ref{bern} to show that a Bernoulli shift representation is not possible, since this result only concerns the pullback synchronization.  
But from the Borel-Cantelli lemma, $\sum_{n\geq 1}\Vert D_n\Vert_q$ cannot be finite which shows that any Bernoulli shift representation on the natural innovations space would not satisfy the usual weak dependence condition, since in this case $\delta_q(n)=\Vert D_{n+1}\Vert_q$.
\end{remark}

\begin{proof}
We first record that $D_t\neq0$ a.s. for every $t\geq1$. Indeed, the positivity of the Lyapunov exponent and the positivity of the invariant density imply
\[
m\{x:r'(x)=0\}=0.
\]
Consequently, $r$ has the following inverse-null-set property: if $A$ is negligible for the Lebesgue measure, then $r^{-1}(A)$ is negligible. This follows by covering compact subsets of $\{r'\neq0\}$ by finitely many intervals on which $r$ is a $C^1$-diffeomorphism, and then using that $m\{r'=0\}=0$.

We next prove that, if $(X,Y)$ is absolutely continuous with respect to the Lebesgue measure on $\mathbb R^2$, then so is $(r(X),r(Y))$. Let $B\subset\mathbb R^2$ be a Borel set such that $m_2(B)=0$, and set
\[
A=\{(x,y):(r(x),r(y))\in B\}.
\]
By Fubini's theorem, for almost every $v$, the section $B_v=\{u:(u,v)\in B\}$ is negligible. Using the inverse-null-set property of $r$, we get
\[
m\left(r^{-1}(B_{r(y)})\right)=0
\]
for almost every $y$, and therefore $m_2(A)=0$. Hence $(r(X),r(Y))$ is absolutely continuous whenever $(X,Y)$ is.

Since $(X_1,X_1')$ is absolutely continuous, an induction gives that $(X_t,X_t')$ is absolutely continuous for every $t\geq1$. In particular,
\[
\P(D_t=0)=0,
\qquad t\geq1.
\]
Thus $Q_t$ is well-defined a.s. for every $t\geq1$, and
\begin{equation}\label{eq:Dt-product}
|D_{n+1}|=|D_1|\prod_{t=1}^n Q_t,
\qquad n\geq1.
\end{equation}

We now prove that, on the event $\{D_t\to0\}$,
\begin{equation}\label{eq:cesaro-adjacent-general}
\frac1n\sum_{t=1}^n\left(
\log Q_t-\log |r'(X_t)|
\right)\longrightarrow0
\quad\mbox{in probability}.
\end{equation}
Let
\[
\Delta_t=\log Q_t-\log |r'(X_t)|.
\]
Fix $\eta>0$. Since $r'$ is uniformly continuous, with modulus of continuity $\omega$, if $|D_t|\leq\delta$ and $|r'(X_t)|>\eta$, then
\[
\left|\frac{r(X_t)-r(X_t')}{X_t-X_t'}-r'(X_t)\right|\leq \omega(\delta).
\]
For $\delta\in (0,\delta_0)$, we get
\[
|\Delta_t|\mathds{1}_{\{|r'(X_t)|>\eta,\ |D_t|\leq\delta\}}
\leq C_\eta\omega(\delta),
\]
for a constant $C_\eta$ depending only on $\eta$.

On the complementary set $\{|r'(X_t)|\leq\eta\}$, we use
\[
|\Delta_t|
\leq \log_+Q_t+\log_-Q_t+
\log_+|r'(X_t)|+
\log_-|r'(X_t)|.
\]
Since $r'$ is bounded, $Q_t\leq\|r'\|_\infty$ and both positive parts are bounded. Moreover, by stationarity of $(X_t)$ and the integrability of $\log_-|r'(X_0)|$,
\[
\E\left[
\log_-|r'(X_t)|\mathds{1}_{\{|r'(X_t)|\leq\eta\}}
\right]
=
\E\left[
\log_-|r'(X_0)|\mathds{1}_{\{|r'(X_0)|\leq\eta\}}
\right]
\longrightarrow0.
\]
Together with \eqref{eq:new-log-condition}, this gives
\begin{equation}\label{eq:bad-zone-control}
\lim_{\eta\downarrow0}\sup_{t\geq2}
\E\left[
|\Delta_t|\mathds{1}_{\{|r'(X_t)|\leq\eta,\ |D_t|\leq\delta_0\}}
\right]=0.
\end{equation}

Let $C=\{D_t\to0\}$. On $C$, for the above fixed $\delta$, the set of times for which $|D_t|>\delta$ is a.s. finite. Hence the contribution of these times to Cesaro averages is negligible. Combining the previous estimates and then applying Markov's inequality, we obtain
\[
\frac1n\sum_{t=1}^n |\Delta_t|\mathds{1}_C
\longrightarrow0
\quad\mbox{in probability}.
\]
This proves \eqref{eq:cesaro-adjacent-general} on $C$.

Assume now, by contradiction, that $\P(C)>0$. From \eqref{eq:cesaro-adjacent-general}, there exists a subsequence $(n_j)$ such that
\[
\frac1{n_j}\sum_{t=1}^{n_j}\left(
\log Q_t-\log |r'(X_t)|
\right)\mathds{1}_C
\longrightarrow0
\quad\mbox{a.s.}
\]
On the other hand, by the ergodic theorem,
\[
\frac1n\sum_{t=1}^n\log |r'(X_t)|
\longrightarrow \lambda>0
\quad\mbox{a.s.}
\]
Therefore, on $C$,
\[
\frac1{n_j}\sum_{t=1}^{n_j}\log Q_t
\longrightarrow \lambda>0
\quad\mbox{a.s.}
\]
In particular,
\[
\sum_{t=1}^{n_j}\log Q_t\longrightarrow +\infty
\quad\mbox{on }C.
\]
Using \eqref{eq:Dt-product}, we get
\[
|D_{n_j+1}|\longrightarrow +\infty
\quad\mbox{on }C,
\]
which contradicts the definition of $C$. Thus $\P\left(\displaystyle\lim_{t\rightarrow \infty} D_t =0\right)=0$.
Since 
$$\P\left(\displaystyle\lim_{t\rightarrow \infty} D_t =0\right)=\int\int \P\left(\lim_{t\rightarrow \infty}\left\vert f_2^t(x)-f_2^t(y)\right\vert=0\right)f_{X_1,X_1'}(x,y)dxdy,$$
where 
$$f_{X_1,X_1'}(x,y)=\E\left[f_{\varepsilon}\left(x-r(X_0)\right)f_{\varepsilon}(y-r(X_0))\right]>0,$$ 
the absence of forward synchronization follows.
\end{proof}
\subsection{An explicit example}

We consider the Markov chain
\[
X_t=af(X_{t-1})+\varepsilon_t,
\]
where $(\varepsilon_t)_{t\geq 1}$ are i.i.d. real-valued random variables with density $f_\varepsilon$ and $f:\R\rightarrow \R$ is a continuously differentiable periodic function with period $T>0$. Without loss of generality, we assume that $\Vert f'\Vert_{\infty}=1$.
Assume that $f_\varepsilon$ satisfies the following periodized boundedness condition:
\begin{equation}\label{eq:periodized-noise-bound}
M:=\sup_{u\in\R}\sum_{\ell\in\Z} f_\varepsilon(u+\ell T)<\infty.
\end{equation}
This assumption is satisfied, for instance, by the standard Gaussian density.

Let $\pi_a$ be an invariant probability measure of the chain, and assume that under $\pi_a$ the stationary random variable $X$ has density $p_a$. The Lyapunov exponent associated with the synchronous coupling is now denoted by
\[
\lambda_a=\int_\R \log |a f'(x)|\,\pi_a(d x).
\]
Observe that when $\vert a\vert<1$, then $r=a f$ is a contraction and a Bernoulli shift expansion is available 
in term of the innovations sequence. Moreover the functional dependence measure coefficients decay geometrically (when the required $q$th moment of the noise $\varepsilon_0$ is finite). In this case, the Lyapunov exponent $\lambda_a$ is negative. In the following result, we consider situations for which $\vert a\vert>1$.

\begin{proposition}\label{lyapsin}

Suppose that there exist some $\delta_0>0$ such that
\begin{equation}\label{integral}
\int_0^T\log_{-}\vert f'(u)\vert du<\infty,\quad \lim_{\eta\searrow 0}\sup_{d:\vert d\vert\leq \delta_0}\int_0^T\mathds{1}_{\vert r'(u)\vert\leq \eta}\log_{-}\frac{\vert r(u+d)-r(u)\vert}{\vert d\vert} du=0.
\end{equation}
Under \eqref{eq:periodized-noise-bound}, there exists $k_0<\infty$ such that, for every $\vert a\vert >k_0$,
\[
\lambda_a>0.
\]
More precisely,
\[
\lambda_a\geq \log \vert a\vert-C_\varepsilon,
\]
where
\[
C_\varepsilon=M\int_0^T\bigl|\log|f'(u)|\bigr|\,d u<\infty.
\]
Moreover the conclusions of Theorem \ref{p1} are valid.
\end{proposition}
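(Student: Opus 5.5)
Write $\lambda_a=\log|a|+\int_\R\log|f'(x)|\,p_a(x)\,dx$. The plan is to bound the second term from below by $-C_\varepsilon$, using a uniform upper bound on the periodization of the stationary density $p_a$.

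\textbf{Periodized density bound.} By the invariance identity, for every $y$,
\[
p_a(y)=\int f_\varepsilon(y-af(x))\,p_a(x)\,dx .
\]
Summing over translates by $\ell T$ and using \eqref{eq:periodized-noise-bound} gives, for every $u$,
\[
\sum_{\ell\in\Z}p_a(u+\ell T)=\int\sum_{\ell}f_\varepsilon(u+\ell T-af(x))\,p_a(x)\,dx\le M,
\]
uniformly in $a$. Interchanging sum and integral is justified by Tonelli.

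\textbf{Lower bound on $\lambda_a$.} Since $f'$ is $T$-periodic,
\[
\int\log|f'|\,p_a=\int_0^T\log|f'(u)|\sum_\ell p_a(u+\ell T)\,du .
\]
Because $\|f'\|_\infty=1$, we have $\log|f'|\le0$. Hence this quantity is at least
\[
-M\int_0^T\log_-|f'(u)|\,du=-C_\varepsilon,
\]
which is finite by \eqref{integral}. This also gives $\E\log_-|r'(X_0)|<\infty$ with $r=af$. So $\lambda_a\ge\log|a|-C_\varepsilon>0$ once $|a|>k_0:=e^{C_\varepsilon}$.

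\textbf{Hypotheses of Theorem~\ref{p1}.} The map $r=af$ is $C^1$, bounded and periodic, so $r'$ is bounded and uniformly continuous. The noise density is assumed positive and lower-bounded on compacts, as in the setting of \eqref{ar1}. It remains to check \eqref{eq:new-log-condition}, which I expect to be the main obstacle.

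Condition on $D_t=d$ and work with the joint density of $(X_t,X_t')$. Writing $X_t'=X_t-D_t$, the integrand is
\[
\log_-\frac{|r(X_t)-r(X_t-D_t)|}{|D_t|}\,\mathds 1_{\{|r'(X_t)|\le\eta\}} .
\]
I would show that, for $t\ge2$, the conditional density of $X_t$ given $(\mathcal F_{t-1},\varepsilon'_1)$ is $f_\varepsilon(\cdot-r(X_{t-1}))$. Conditionally on the past, $D_t=r(X_{t-1})-r(X'_{t-1})$ is then fixed. So the expectation equals the expectation over the past of
\[
\int f_\varepsilon(x-r(X_{t-1}))\,\mathds 1_{\{|r'(x)|\le\eta\}}\,\log_-\frac{|r(x)-r(x-d)|}{|d|}\,dx ,\qquad d=D_t .
\]
Periodizing in $x$ and applying \eqref{eq:periodized-noise-bound}, this is at most
\[
M\sup_{|d|\le\delta_0}\int_0^T\mathds 1_{\{|r'(u)|\le\eta\}}\log_-\frac{|r(u)-r(u-d)|}{|d|}\,du ,
\]
uniformly in $t$ and in the past.

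There is a sign mismatch: \eqref{integral} is stated with $u+d$ while this bound involves $u-d$. Since the supremum there runs over $|d|\le\delta_0$, replacing $d$ by $-d$ leaves the quantity unchanged. The bound therefore tends to $0$ as $\eta\downarrow0$ by the second part of \eqref{integral}, which gives \eqref{eq:new-log-condition}. Theorem~\ref{p1} then applies.
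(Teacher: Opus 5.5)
Your proposal is correct and follows the paper's proof essentially step by step. It uses the same ingredients: the Tonelli bound $\overline p_a\le M$ on the periodized stationary density, the periodicity argument giving $\lambda_a\ge\log|a|-C_\varepsilon$ with $k_0=e^{C_\varepsilon}$, and the verification of \eqref{eq:new-log-condition} by conditioning on the past (so that $X_t$ has density $f_\varepsilon(\cdot-r(X_{t-1}))$ while $D_t$ is frozen) and then periodizing against $M$. Beyond the paper, you also resolve the $u+d$ versus $u-d$ sign via the symmetric supremum over $|d|\le\delta_0$, and you check explicitly that $\E\log_-|r'(X_0)|<\infty$; the paper leaves both points implicit.
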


\begin{proof}
Since the chain is stationary, one may write
\[
X\stackrel{d}{=}a f(Y)+\varepsilon,
\]
where $Y\sim\pi_a$ and $\varepsilon$ is independent of $Y$. Hence the density $p_a$ of $X$ is of the form
\[
p_a(x)=\int_\R f_\varepsilon(x-a f(y))\,\pi_a(d y).
\]
Equivalently, if $\mu_a$ denotes the law of $a f(Y)$, then
\[
p_a(x)=\int_\R f_\varepsilon(x-z)\,\mu_a(d z).
\]
Define the $T$-periodization of $p_a$ by
\[
\overline p_a(u)=\sum_{\ell\in\Z} p_a(u+T\ell),\qquad u\in[0,T).
\]
By Tonelli's theorem,
\begin{align*}
\overline p_a(u)
&=\sum_{\ell\in\Z}\int_\R f_\varepsilon(u+T\ell-z)\,\mu_a(d z)\\
&=\int_\R \sum_{\ell\in\Z} f_\varepsilon(u-z+T\ell)\,\mu_a(d z)\\
&\leq M.
\end{align*}
The constant $M$ is independent of $a$ and $u$.

Now use the $T$-periodicity of $x\mapsto\log|f'(x)|$. Since $p_a$ is a probability density,
\begin{align*}
\int_\R \log|f'(x)|\,p_a(x)d x
&=\int_0^T \log|f'(u)|\,\overline p_a(u)d u.
\end{align*}
From our assumptions, the function $u\mapsto \log| f'(u)|$ is integrable on $[0,T]$. Therefore,
\begin{align*}
\int_\R \log|f'(x)|\,p_a(x)d x
&\geq -\int_0^T \bigl|\log|f'(u)|\bigr|\overline p_a(u)d u\\
&\geq -M\int_0^T \bigl|\log|f'(u)|\bigr|d u\\
&=:-C_\varepsilon.
\end{align*}
Consequently,
\begin{align*}
\lambda_a
&=\int_\R \log|af'(x)|\,p_a(x)d x\\
&=\log \vert a\vert +\int_\R \log|f'(x)|\,p_a(x)d x\\
&\geq \log \vert a\vert-C_\varepsilon.
\end{align*}
Thus $\lambda_a>0$ as soon as $\vert a\vert >\exp(C_\varepsilon)$.

In order to apply Theorem \ref{p1}, it remains to check the integrability condition \ref{eq:new-log-condition}. Setting $a_t=r(X_{t-1})$, observing that $D_t=a_t-r(X'_{t-1})$ and using the periodicity of $(r,r')$, we have the bounds:
\begin{eqnarray*}
\E\left[\mathds{1}_{\vert D_t\vert\leq \delta_0}\mathds{1}_{\vert r'(X_t)\vert \leq \eta}\log_{-}Q_t\right]&\leq& \E\left[\mathds{1}_{\vert D_t\vert\leq \delta_0}\int_{\R}\mathds{1}_{\vert r'(u)\vert\leq \eta}\log_{-}\frac{\vert r(u+D_t)-r(u)\vert}{\vert D_t\vert}f_{\varepsilon}(u-a_t)du\right] \\
&\leq& M \sup_{\vert d\vert\leq \delta_0}\int_0^T\mathds{1}_{\vert r'(u)\vert\leq \eta}\log_{-}\frac{\vert r(u+d)-r(u)\vert}{\vert d\vert} du,
\end{eqnarray*}
which shows that (\ref{eq:new-log-condition}) is satisfied and completes the proof.
\end{proof}

\begin{remark}
Condition (\ref{integral}) is not always easy to check for an arbitrary mapping $f$. 
Let us check it for the sine and cosine function. Here $T=2\pi$.
For $r(x)=a\sin x$, we have
$$\frac{r(u+d)-r(u)}{d}=a\cos(u+d/2)\frac{\sin(d/2)}{d/2}.$$

If $\vert d\vert\leq \delta_0$ sufficiently small, then
$$\frac{\vert r(u+d)-r(u)\vert}{\vert d\vert}\geq \vert a\cos(u+d/2)\vert/2.$$
Setting $a_\eta:=\eta/|a|$, 
Let
\[
g(u)=\log_-\left(a|\cos u|/2\right),
\qquad
E_{\eta}=\{u\in[0,T): |\cos u|\leq a_{\eta}\}.
\]
 we get
\begin{eqnarray*}
\int_0^T\mathds{1}_{\vert r'(u)\vert\leq \eta}\log_{-}\frac{\vert r(u+d)-r(u)\vert}{\vert d\vert} du&\leq& \int_0^T\mathds{1}_{\vert\cos(u)\vert\leq a_{\eta}} g(u+d/2)du\\
&\leq & \sup_{\vert c\vert \leq \delta_0/2}\int_{E_{\eta}}g(v+c)dv.
\end{eqnarray*}
The last term tends to $0$ as $\eta\downarrow0$. Indeed, $g\in L^1([-\delta_0/2,T+\delta_0/2])$ and if $m$ denotes the Lebesgue measure on $\R$, $m(E_{\eta})\to 0$ as $\eta\downarrow0$, and by absolute continuity of the Lebesgue integral,
\[
\sup_{\vert c\vert\leq \delta_0/2}\int_{E_{\eta}} g(v+c)dv
=
\sup_{\vert c\vert\leq \delta_0/2}\int_{E_{\eta}+c}g(w)dw
\leq
\sup_{A\in \mathcal{B}([-\delta_0/2,T+\delta_0/2]):m(A)\leq m(E_{\eta})}\int_A g(w)dw
\longrightarrow0.
\]
For $r(x)=a\cos(x)$, the decomposition 
$$\frac{r(u+d)-r(u)}{d}=-a\sin(u+d/2)\frac{\sin(d/2)}{d/2}$$
leads to similar result.
\end{remark}

\begin{remark}\label{gausssin}
For the standard Gaussian density
\[
f_\varepsilon(x)=\frac{1}{\sqrt{2\pi}}e^{-x^2/2},
\]
the bound \eqref{eq:periodized-noise-bound} is immediate. Indeed, the function
\[
u\mapsto \sum_{\ell\in\Z}f_\varepsilon(u+T\ell)
\]
is $T$-periodic. For $u\in[-T/2,T/2]$,
\[
|u+T\ell|\geq T|\ell|-T/2,
\]
and therefore
\[
\sum_{\ell\in\Z} f_\varepsilon(u+T\ell)
\leq
\frac{1}{\sqrt{2\pi}}+\frac{2}{\sqrt{2\pi}}
\sum_{\ell\geq1}
\exp\left(-\frac{(T\ell-T/2)^2}{2}\right)<\infty.
\]
\end{remark}

\begin{remark}
When $f(x)=\sin(x)$ and $\varepsilon_0$ follow the standard Gaussian distribution, we found numerically that $k_0\approx 5.827$. This shows that this threshold is much larger than $1$ but the condition $\lambda_a$ might be valid for smaller values of $\vert a\vert$.

\end{remark}

\section{Bernoulli representations for Doeblin Markov chains}\label{4}

The examples of Sections~\ref{2} and~\ref{3} show that the natural innovation representation of an autoregressive Markov chain need not be suitable for the direct application of functional dependence techniques. This should not be interpreted as the non-existence of a causal Bernoulli shift representation. At the contrary, uniformly ergodic Markov chains always admit such a representation after enlarging the underlying probability space. First, uniform ergodicity always entails the Doeblin property given below, see for instance \cite{MeynTweedie1993}, Theorem $16.0.2$. We recall that 
a transition kernel $P$ satisfies a $m-$step Doeblin condition if there exist $\alpha>0$, a positive integer $m$ and a probability measure $\nu$ such that
\[
P^m(x,A)\geq \alpha \nu(A),
\qquad
x\in E,\;A\in\mathcal E.
\]
Such a condition is satisfied for all the Markov chains examples considered previously with $m=1$. We now only consider the case $m=1$ but a similar construction using blocks is possible for any value of $m$.
Writing
\[
P(x,\cdot)
=
\alpha \nu(\cdot)
+
(1-\alpha)Q(x,\cdot),
\]
for a suitable Markov kernel $Q$ and using the Nummelin splitting construction together with the standard quantile representation of Markov kernels on R, we may realize the chain recursively from three independent i.i.d.\ sequences:

\begin{itemize}
\item Bernoulli random variables $(B_n)$ with
$\mathbb P(B_n=1)=\alpha$;
\item random variables $(Y_n)$ with distribution $\nu$;
\item uniform random variables $(U_n)$ used to simulate the residual kernel $Q$.
\end{itemize}

More precisely, there exists a measurable function $\Phi$ such that
\[
X_n
=
\Phi(X_{n-1},B_n,Y_n,U_n),
\]
with
\[
X_n=
\begin{cases}
Y_n,&B_n=1,\\
Q^{-1}(X_{n-1},U_n),&B_n=0,
\end{cases}
\]
where $Q^{-1}$ denotes any measurable simulation map associated with the kernel $Q$.

The Bernoulli variables play the role of regeneration indicators. Whenever $B_n=1$, the chain is restarted from the distribution $\nu$ and completely forgets its previous state. Consequently, two copies driven by the same innovation sequence necessarily coalesce after the first regeneration time. Since the latter has a geometric distribution, the associated functional dependence coefficients decay geometrically. More precisely, If $T$ denotes the coupling time for two paths generated synchronously except at time $0$ and if $X_0\in \L^r$ for $r>q$, we have from Holder inequality,
$$\delta_q(n)\leq C\P\left(T>n\right)^{1/q-1/r}\leq C\left(1-\alpha\right)^{n(1/q-1/r)}.$$

One can then deduce a Bernoulli shift representation with exponentially decaying functional dependence coefficients. The examples of the previous sections therefore illustrate that the main issue is not the existence of a Bernoulli representation itself, but rather whether the \emph{natural} innovation representation enjoys the required forgetting properties. Once the previous representation defined, the functional AR representation holds true setting $\varepsilon_t=X_t-r(X_{t-1})$. We also point out that such Markov chains are also automatically $\phi-$mixing with geometric decays and then many statistical applications can be deduced.

\section{Conclusion}\label{conclusion}

The examples presented in this note illustrate that, for Markov chains generated by iterated random maps, the natural innovation representation may not be suitable for a direct application of the functional dependence measures, even in case the Markov chain is uniformly ergodic. From this perspective, functional dependence should be viewed as a property of a particular stochastic representation rather than of the process distribution itself.

Several questions remain open. First, our smooth example suggests a close connection between the behavior of the natural synchronous coupling and the sign of an associated Lyapunov exponent. While a positive exponent prevents the forgetting of initial perturbations in our setting, it would be of considerable interest to determine whether a negative Lyapunov exponent is sufficient to recover the natural Bernoulli shift representation beyond the globally contractive framework of \cite{diaconis1999iterated} or the geometric moment contraction condition of \cite{WuShao2004}.

A second direction concerns nonlinear autoregressive models whose ergodicity is established through Foster--Lyapunov drift conditions and minorization arguments. Such models fall well outside the globally contractive setting and include, for instance, threshold autoregressive processes and related nonlinear Markov chains. Whether the natural innovation representation is compatible with functional dependence techniques in this broader context appears to be an interesting open problem.
\normalfont
\normalsize
\bibliographystyle{plainnat}

\bibliography{hist}
\end{document}